\documentclass[12pt,twoside,reqno]{amsart}

\allowdisplaybreaks
\newtheorem{thm}{Theorem}[section]
\newtheorem{lemma}{Lemma}[section]
\newtheorem{rem}{Remark}[section]

\theoremstyle{definition}

\theoremstyle{remark}

\newcommand{\R}{{\mathbb R}}
\newcommand{\N}{{\mathbb N}}

\newcommand{\noi}{\noindent}

\numberwithin{equation}{section}

\begin{document}
\title[K-S on grooves]	
{	Global  Solutions for the   Kuramoto-Sivashinsky equation posed on unbouded 3D grooves
}
\author{N. A. Larkin}

\address
{
	Departamento de Matem\'atica, Universidade Estadual
	de Maring\'a, Av. Colombo 5790: Ag\^encia UEM, 87020-900, Maring\'a, PR, Brazil
}

\thanks
{\tiny{Departamento de Matemática, Universidade Estadual de Maringá, Av. Colombo 5790, 87020-900,
		Maringá, Parana, Brazil; email: nlarkine@uem.br}\\
	MSC 2010:35B35;35K91;35Q53.\\
	Keywords: Kuramoto-Sivashinsky equation, Global solutions; Decay in Bounded and Unbounded Domains
}
\bigskip

\email{ nlarkine@uem.br;nlarkine@yahoo.com.br }
\date{}

\begin{abstract}Initial boundary value problems for the three dimensional Kuramoto-Sivashinsky equation posed on unbounded 3D grooves were considered. The existence and uniqueness of  global  strong solutions  as well as their exponential decay  have been established.
\end{abstract}

\maketitle

\section{Introduction}\label{introduction}
This work concerns the existence and uniqueness of global strong  solutions as well  as  exponential decay rates of solutions to initial-boundary value problems for the three dimensional Kuramoto-Sivashinsky  equation (K-S):
\begin{align}
	& \phi_t+\Delta^2 \phi+ \Delta \phi +\frac{1}{2}|\nabla \phi|^2=0.
\end{align}
Here $\Delta$ and $\nabla$ are the Laplacian and the gradient  in  $\R^3.$
In \cite{kuramoto}, Kuramoto studied the turbulent phase waves  and Sivashinsky in \cite{sivash} obtained an asymptotic equation which modeled the evolution of a disturbed plane flame front. See also \cite{Cross,cuerno}. In \cite{Iorio, Guo,cousin,feng,Larkin2,temam1,temam2,zhang},  mathematical  results on  initial and initial boundary value problems for one dimensional (1.1)  are presented, see references  there for more information. Multidimesional problems for various  types of (1.1) can be found in \cite{kukavica,gramchev,Guo,molinet,temam1,sell,temam2} with some results on existence, regularity and nonlinear stability of solutions.\\
For three dimensions, (1.1) can be rewritten in the form of the following system:

 \begin{align}
	&(u_j)_t+\Delta^2 u_j+\Delta u_j +\frac{1}{2}\sum_{i=1}^3(u^2_i)_{x_j}=0,   \\
	&(u_j)_{x_i}=(u_i)_{x_j},\;\;i\ne j;\;\;i,j,=1,2,3,
	\end{align}
where $u_j=\phi_{x_j}.$ 
First essential problem that arises while one studies either (1.1) or (1.2)-(1.3) is a destabilizing effect of 
$\Delta u_j $ that may be damped by a dissipative term $\Delta^2 u_j$ provided a domain has some specific properties. Naturally, so called "thin domains"  here appear where some dimensions are small while the others may be arbitrarily large.
Second essential problem is presence of semilinear interconnected terms in (1.2). This does not allow to obtain  first estimate independent of solutions and leads to a connection between geometric properties of a domain and initial data.\\
Our work has the following structure: Chapter I is Introduction. Chapter 2 contains notations and auxiliary facts. In Chapter 3, formulation of an initial boundary value problem for (1.2)-(1.3) posed on a unbounded groove  is given. The existence of a global strong  solution, exponential decay of the $L^2$-norm  have been established.   Chapter 4  contains conclusions.

\section{Notations and Auxiliary Facts}

Let $\Omega$ be a domain in $\R^3$ and $x = (x_1,x_2,x_3) \in \Omega$. We use the standard notations of Sobolev spaces $W^{k,p}$, $L^p$ and $H^k$ for functions and the following notations for the norms \cite{Adams, Brezis}:
for scalar functions $f(x,t)$\\
$$\R^+=\{t\in\R^1;\;t\geq 0\},\;\;\| f \|^2 = \int_{\Omega} | f |^2d\Omega, \;\; \| f \|_{L^p(\Omega)}^p = \int_{\Omega} | f  |^p\, d\Omega,$$
$$\| f \|_{W^{k,p}(\Omega)}^p = \sum_{0 \leq \alpha \leq k} \|D^\alpha f \|_{L^p(\Omega)}^p, \hspace{1cm} \| f \|_{H^k(\Omega)} = \| f \|_{W^{k,2}(\Omega)}.$$

When $p = 2$, $W^{k,p}(\Omega) = H^k(\Omega)$ is a Hilbert space with the scalar product 
$$((u,v))_{H^k(\Omega)}=\sum_{|j|\leq k}(D^ju,D^jv),\;
\|u\|_{L^{\infty}(\Omega)}=ess\; sup_{\Omega}|u(x)|.$$
We use a notation $H_0^k(\Omega)$ to represent the closure of $C_0^\infty(\Omega)$, the set of all $C^\infty$ functions with compact support in $\Omega$, with respect to the norm of $H^k(\Omega)$.

\begin{lemma}[Steklov's Inequality \cite{steklov}] Let $v \in H^1_0(0,L).$ Then
	\begin{equation}\label{Estek}
	\frac {\pi^2}{L^2}\|v\|^2 \leq \|v_x\|^2.
	\end{equation}
\end{lemma}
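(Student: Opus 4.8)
The plan is to establish this sharp Poincar\'e-type inequality through a Fourier expansion of $v$ in the orthogonal basis adapted to the Dirichlet conditions on $(0,L)$. I would work with the system $\{\sin(n\pi x/L)\}_{n\ge 1}$, which is complete and orthogonal in $L^2(0,L)$ and, decisively, respects the vanishing boundary values encoded in $H^1_0(0,L)$. Since $C_0^\infty(0,L)$ is dense in $H^1_0(0,L)$ and both sides of \eqref{Estek} are continuous in the $H^1$ norm, it suffices to prove the estimate for smooth compactly supported $v$ and then pass to the limit.

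For such $v$ I would write $v(x)=\sum_{n\ge 1} b_n \sin(n\pi x/L)$ with $b_n=\frac{2}{L}\int_0^L v(x)\sin(n\pi x/L)\,dx$, so that Parseval's identity gives $\|v\|^2=\frac{L}{2}\sum_{n\ge 1} b_n^2$. Integrating by parts and using $v(0)=v(L)=0$, the coefficients of $v_x$ against the cosine system are $b_n\,n\pi/L$, whence $\|v_x\|^2=\frac{L}{2}\sum_{n\ge 1} b_n^2\,n^2\pi^2/L^2$. The inequality is then immediate from the elementary bound $n^2\ge 1$:
$$\|v_x\|^2 = \frac{L}{2}\sum_{n\ge 1} b_n^2\,\frac{n^2\pi^2}{L^2}\ge \frac{\pi^2}{L^2}\cdot\frac{L}{2}\sum_{n\ge 1} b_n^2 = \frac{\pi^2}{L^2}\|v\|^2.$$

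I expect the only genuine subtlety to be the justification of the termwise differentiation, namely that the differentiated series converges to $v_x$ in $L^2$ and that the integration by parts produces no surviving boundary term. Both rest entirely on the vanishing trace built into $H^1_0(0,L)$; for functions in $H^1$ without this condition the sine system is no longer the natural one and the stated constant may fail. Finally, the constant $\pi^2/L^2$ is sharp, being attained precisely when only the first mode survives, i.e.\ $v(x)=c\sin(\pi x/L)$, the ground state of $-\partial_{xx}$ under Dirichlet boundary conditions.
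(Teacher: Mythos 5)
Your proof is correct, but there is nothing in the paper to compare it against: the paper states this lemma as a classical result and cites Steklov's original 1896 paper, offering no proof of its own. Your Fourier-series argument is the standard self-contained proof, and it buys something the citation does not: it exhibits the extremals $v=c\sin(\pi x/L)$ and hence the sharpness of the constant $\pi^2/L^2$. Two small points of care in your write-up. First, when you apply Parseval to $v_x$, the relevant orthogonal system in $L^2(0,L)$ is $\{1\}\cup\{\cos(n\pi x/L)\}_{n\geq 1}$, so besides computing the coefficients against $\cos(n\pi x/L)$ by parts you must also check that the zero mode vanishes, i.e. $\frac{1}{L}\int_0^L v_x\,dx=\frac{v(L)-v(0)}{L}=0$; this again rests on the Dirichlet conditions, so it is covered by the principle you invoke, but it is a separate check from the absence of boundary terms in the integration by parts, and without it the identity $\|v_x\|^2=\frac{L}{2}\sum_{n\geq 1}b_n^2\,n^2\pi^2/L^2$ is not justified. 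Second, your reduction to $C_0^\infty(0,L)$ by density, with both sides of the inequality continuous in the $H^1$ norm, is exactly the right device to avoid justifying termwise differentiation of the sine series for a general element of $H^1_0(0,L)$; keeping that step explicit is what makes the argument rigorous rather than formal.
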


\begin{lemma}
	[Differential form of the Gronwwall Inequality]\label{gronwall} Let $I = [t_0,t_1]$. Suppose that functions $a,b:I\to \R$ are integrable and a function $a(t)$ may be of any sign. Let $u:I\to \R$ be a differentiable function satisfying
	\begin{equation}
		u_t (t) \leq a(t) u(t) + b(t),\text{ for }t \in I\text{ and } \,\, u(t_0) = u_0,
	\end{equation}
	then
	$$u(t) \leq u_0 e^{ \int_{t_0}^t a(\tau)\, d\tau } + \int^t_{t_0} e^{\int_{t_0}^s a(r) \, dr} b(s) ds$$\end{lemma}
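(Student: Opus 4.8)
The plan is to prove the estimate by the classical integrating-factor device, treating the differential inequality exactly as one treats the associated linear ODE $u_t = a(t)u + b(t)$. First I would introduce the primitive $A(t) = \int_{t_0}^t a(\tau)\, d\tau$, which is well defined and absolutely continuous because $a$ is integrable on $I$, and which satisfies $A(t_0) = 0$ and $A'(t) = a(t)$ almost everywhere; no sign condition on $a$ is needed, since $A$ is allowed to increase or decrease freely. The purpose of $A$ is that $e^{-A(t)}$ is a strictly positive differentiable multiplier with $\frac{d}{dt}\, e^{-A(t)} = -a(t)\, e^{-A(t)}$.

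Next I would multiply the hypothesis $u_t(t) \le a(t) u(t) + b(t)$ by the positive factor $e^{-A(t)}$, which preserves the direction of the inequality, and transpose the middle term to obtain
$$ e^{-A(t)} u_t(t) - a(t)\, e^{-A(t)} u(t) \le e^{-A(t)} b(t). $$
By the product rule the left-hand side is exactly $\frac{d}{dt}\big[e^{-A(t)} u(t)\big]$, so that $\frac{d}{dt}\big[e^{-A(t)} u(t)\big] \le e^{-A(t)} b(t)$. Integrating this scalar inequality over $[t_0,t]$ and using $A(t_0)=0$ together with the initial condition $u(t_0)=u_0$, the left-hand side telescopes and yields
$$ e^{-A(t)} u(t) - u_0 \le \int_{t_0}^t e^{-A(s)} b(s)\, ds. $$

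Finally I would multiply through by the positive quantity $e^{A(t)}$, again without reversing the inequality, which gives the bound in factored form:
$$ u(t) \le e^{A(t)}\Big( u_0 + \int_{t_0}^t e^{-A(s)} b(s)\, ds \Big) = u_0\, e^{\int_{t_0}^t a(\tau)\, d\tau} + e^{\int_{t_0}^t a(r)\, dr}\int_{t_0}^t e^{-\int_{t_0}^s a(r)\, dr} b(s)\, ds. $$
This is exactly the asserted estimate: the first term carries the prefactor $e^{\int_{t_0}^t a(\tau)\, d\tau}$, and in the integral the exponent $\int_{t_0}^s a(r)\, dr$ appears, accompanied by the same prefactor $e^{\int_{t_0}^t a}$ that stands before the first term; combining the two exponentials, the factor multiplying $b(s)$ is $e^{\int_{t_0}^t a - \int_{t_0}^s a} = e^{\int_s^t a(r)\, dr}$. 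I expect no genuine obstacle, since the computation is mechanical; the only care required is regularity bookkeeping --- ensuring that $e^{-A(t)} u(t)$ is differentiable (guaranteed by the differentiability of $u$ and the absolute continuity of $A$) so that the Fundamental Theorem of Calculus applies to the derivative inequality, and checking that the multiplier stays positive at each step so that no inequality reverses.
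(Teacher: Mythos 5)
Your integrating-factor computation is the standard proof of the differential Gronwall lemma, and the core of it is correct; note also that the paper states Lemma 2.2 without any proof (it is quoted as a known auxiliary fact), so there is no in-paper argument to compare against. The passage through $\frac{d}{dt}\bigl[e^{-A(t)}u(t)\bigr]\le e^{-A(t)}b(t)$ followed by integration over $[t_0,t]$ is the canonical route, and your regularity remark is the right one to make, with the small emendation that for merely integrable $a$ the factor $e^{-A}$ is absolutely continuous rather than everywhere differentiable, so the product $e^{-A}u$ is absolutely continuous and the derivative inequality holds almost everywhere, which is all the fundamental theorem of calculus needs.

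The one genuine defect is your final identification with the printed statement. What you actually derived is
\begin{equation*}
u(t)\le u_0\,e^{\int_{t_0}^t a(\tau)\,d\tau}+\int_{t_0}^t e^{\int_s^t a(r)\,dr}\,b(s)\,ds,
\end{equation*}
whereas the lemma as printed puts $e^{\int_{t_0}^s a(r)\,dr}$ under the integral with no outer prefactor; your closing sentence silently reads the prefactor $e^{\int_{t_0}^t a}$ into the second term of the statement, where it simply does not occur. The two expressions coincide when $a$ is constant, but not in general, and the printed form is in fact false for variable $a$: on $I=[0,1]$ take $u_0=0$, $b\equiv 1$, and $a=M/\varepsilon$ on $[1-\varepsilon,1]$, zero before; solving with equality gives $u(1-\varepsilon)=1-\varepsilon$ and then $u'\ge au$ forces $u(1)\ge(1-\varepsilon)e^{M}$, while the printed bound is at most $(1-\varepsilon)+\frac{\varepsilon}{M}(e^{M}-1)$, which is exceeded for large $M$ and small $\varepsilon$. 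So the inequality you proved is the correct one, and the exponent $\int_{t_0}^s a(r)\,dr$ in the statement should be read as $\int_s^t a(r)\,dr$ (a typo in the lemma). In the only application made in this paper, the uniqueness argument of Lemma 3.2, one has $b\equiv 0$ and $u_0=0$, so the second term is absent and the discrepancy is harmless there; but you should say explicitly that you are proving the corrected form rather than asserting, as your last sentence does, that your bound ``is exactly'' the printed one.
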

The next Lemmas will be used in  estimates:

\begin{lemma}[See \cite{Lady}, Theorem 7.1; \cite{Temam}, Lemma 3.5.] 	 Let $v \in H_0^1(\Omega)$ and $n = 3$, then	
	\begin{equation}
		\label{3.3}\| v \|_{L^4(\Omega)} \leq 2^{1/2} \| v \|^{1/4} \| \nabla v \|^{3/4}.
	\end{equation}
\end{lemma}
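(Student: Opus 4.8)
The inequality \eqref{3.3} is a Gagliardo--Nirenberg (Ladyzhenskaya-type) multiplicative inequality. The exponents $1/4$ and $3/4$ are the only ones compatible with the scaling $v(x)\mapsto v(\lambda x)$ on $\R^3$, so the real content of the statement is the explicit constant $2^{1/2}$. My plan is to prove it first for $v\in C_0^\infty(\Omega)$ and then pass to general $v\in H_0^1(\Omega)$ by density, both sides being continuous in the $H^1$-norm. For $v\in C_0^\infty(\Omega)$ I would extend $v$ by zero to all of $\R^3$; the extension lies in $C_0^\infty(\R^3)$ and preserves $\|v\|$ and $\|\nabla v\|$, so it suffices to establish \eqref{3.3} on $\R^3$.

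As the main route I would combine an interpolation step with the Sobolev embedding. The Cauchy--Schwarz inequality gives at once
$$\|v\|_{L^4(\R^3)}^4=\int_{\R^3} v\cdot v^3\,dx\le \|v\|\,\|v^3\|=\|v\|\,\|v\|_{L^6(\R^3)}^3,$$
which already produces the correct exponents; it then remains to control $\|v\|_{L^6(\R^3)}$ by $\|\nabla v\|$. For the embedding $H_0^1(\R^3)\hookrightarrow L^6(\R^3)$ I would use the slicing (fundamental theorem of calculus) technique: since $v$ has compact support, for each coordinate direction
$$v^2(x)=\int_{-\infty}^{x_i}\partial_{x_i}(v^2)\,dx_i\le 2\int_{\R}|v|\,|\partial_{x_i}v|\,dx_i=:\psi_i(\hat x_i),$$
where $\hat x_i$ denotes the two remaining variables, so each $\psi_i$ is independent of $x_i$. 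Distributing these three one-dimensional bounds over a suitable power of $v$ and integrating successively in $x_1,x_2,x_3$, with a Cauchy--Schwarz step in each variable (the Gagliardo lemma for functions each missing one variable), reduces the integral to a product of the $\psi_i$.

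Bounding $\int_{\R}|v|\,|\partial_{x_i}v|\,dx_i$ by Cauchy--Schwarz produces factors $\|v\|$ and $\|\partial_{x_i}v\|$, and a final application of the arithmetic--geometric mean inequality assembles $\prod_i\|\partial_{x_i}v\|$ into a power of $\|\nabla v\|^2=\sum_i\|\partial_{x_i}v\|^2$. Feeding the resulting $L^6$-bound back into the interpolation identity above and taking the fourth root yields \eqref{3.3}. The direct multiplicative-inequality computations of \cite{Lady,Temam} carry out essentially this scheme.

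The main obstacle is the explicit constant $2^{1/2}$, equivalently $\int_{\R^3} v^4\,dx\le 4\,\|v\|\,\|\nabla v\|^3$: the constants coming from a naive slicing or from a crude Sobolev embedding are lossy, so pinning down exactly $2^{1/2}$ requires careful bookkeeping of the iterated Cauchy--Schwarz steps together with the optimal use of the arithmetic--geometric mean inequality. The density reduction and the zero-extension of $H_0^1(\Omega)$ functions to $H^1(\R^3)$ are the remaining points to justify, but these are routine.
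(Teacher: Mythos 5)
Your proposal is correct in substance, but note first that the paper never proves this lemma at all: it is quoted verbatim from \cite{Lady} (Theorem 7.1) and \cite{Temam} (Lemma 3.5), so the relevant comparison is with the classical argument in those references. That argument is a \emph{direct slicing} proof which never passes through $L^6$: for $v\in C_0^\infty(\R^3)$ one writes $v^2\le 2\int_{\R}|v\,v_{x_i}|\,dx_i$ in each variable, deduces the planar bound $\int_{\R^2}v^4\,dx_1dx_2\le 2\bigl(\int_{\R^2}v^2\bigr)\bigl(\int_{\R^2}(v_{x_1}^2+v_{x_2}^2)\bigr)$ for each fixed $x_3$, bounds $\sup_{x_3}\int_{\R^2}v^2\,dx_1dx_2\le 2\|v\|\|v_{x_3}\|$, and integrates in $x_3$ to get $\int_{\R^3}v^4\,dx\le 4\|v\|\|\nabla v\|^3$ --- which is exactly the constant $2^{1/2}$ of \eqref{3.3} in one pass. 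Your route, the interpolation $\|v\|_{L^4(\R^3)}^4\le\|v\|\,\|v\|_{L^6(\R^3)}^3$ followed by a quantitative $H^1\hookrightarrow L^6$ embedding, is a genuinely different decomposition, and it does close; in fact the obstacle you flag resolves favorably. Using the two-sided fundamental theorem of calculus ($2|v|\le\int_{\R}|\partial_i v|\,dx_i$, giving the factor $\tfrac12$ in the Gagliardo $L^{3/2}$ estimate), applying that estimate to $w=|v|^4$ yields $\|v\|_{L^6}\le 2\prod_{i}\|\partial_i v\|^{1/3}$, and the arithmetic--geometric mean inequality gives $\prod_i\|\partial_i v\|^{1/3}\le\|\nabla v\|/\sqrt3$; hence $\|v\|_{L^6}\le(2/\sqrt3)\|\nabla v\|$ and $\|v\|_{L^4}^4\le\tfrac{8}{3\sqrt3}\|v\|\|\nabla v\|^3$, whose constant $\bigl(8/(3\sqrt3)\bigr)^{1/4}\approx 1.11$ is actually \emph{better} than $2^{1/2}\approx 1.41$. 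Your caution is nonetheless justified: with the one-sided FTC (constant $1$ instead of $\tfrac12$) one gets $\|v\|_{L^6}\le(4/\sqrt3)\|\nabla v\|$ and $(4/\sqrt3)^3>4$, which is too lossy, so the bookkeeping genuinely matters. In summary: the classical proof cited by the paper is shorter and lands exactly on $2^{1/2}$; your interpolation scheme requires the full Gagliardo--Nirenberg machinery but gives a sharper constant. The scaling argument fixing the exponents, the zero-extension, and the density reduction in your write-up are all correct and standard.
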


\section{ K-S system posed on a groove}

Define a groove  $$D=\{x\in\R^3;\;x_1\in \R^1,\; x_2\in  (0,B>0),\; x_3 \in \R^+\},\;Q_t=(0,t)\times D.$$
 \begin{lemma}
	Let $f\in  H^2_0(D).$ Then
	\begin{align}
		&a\|f\|^2\leq\|\nabla f\|^2,\;\;a^2\|f\|^2\leq \|\Delta f\|^2,\;\;a\|\nabla f\|^2\leq \|\Delta f\|^2,\\
		&\text{where} \; a=\frac{\pi^2}{B^2}.
	\end{align}	
\end{lemma}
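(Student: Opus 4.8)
The plan is to extract all three inequalities from a single one-dimensional fact, Steklov's inequality applied in the only bounded direction $x_2 \in (0,B)$. Since $a = \pi^2/B^2$ is precisely the Steklov constant for the interval $(0,B)$, I expect the first inequality to come directly from Steklov, and the remaining two to follow by bootstrapping through integration by parts rather than from any new geometric input.

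First I would prove $a\|f\|^2 \le \|\nabla f\|^2$. Fix $x_1 \in \R$ and $x_3 \in \R^+$; because $f \in H^2_0(D)$, the slice $x_2 \mapsto f(x_1,x_2,x_3)$ belongs to $H^1_0(0,B)$ for a.e. $(x_1,x_3)$. Applying Steklov's inequality \eqref{Estek} with $L = B$ gives $\frac{\pi^2}{B^2}\int_0^B |f|^2\,dx_2 \le \int_0^B |f_{x_2}|^2\,dx_2$. Integrating this in $x_1$ over $\R$ and in $x_3$ over $\R^+$, and using $|f_{x_2}|^2 \le |\nabla f|^2$, yields $a\|f\|^2 \le \|f_{x_2}\|^2 \le \|\nabla f\|^2$.

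Next I would obtain $a\|\nabla f\|^2 \le \|\Delta f\|^2$. Integrating by parts, $\|\nabla f\|^2 = -(f,\Delta f)$, with no boundary contribution since $f \in H^2_0(D)$; the Cauchy--Schwarz inequality then gives $\|\nabla f\|^2 \le \|f\|\,\|\Delta f\|$. Combining this with the first inequality written as $\|f\| \le a^{-1/2}\|\nabla f\|$ produces $\|\nabla f\|^2 \le a^{-1/2}\|\nabla f\|\,\|\Delta f\|$, and dividing by $\|\nabla f\|$ (the case $\nabla f = 0$ being trivial, as then $f=0$) and squaring gives $a\|\nabla f\|^2 \le \|\Delta f\|^2$. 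Finally the middle inequality is immediate by chaining the two already established: $a^2\|f\|^2 \le a\|\nabla f\|^2 \le \|\Delta f\|^2$.

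The one point requiring care is the integration by parts on the unbounded domain $D$: I must ensure there are no boundary terms at $x_2 = 0, B$ and no contributions as $|x_1|, x_3 \to \infty$. This is exactly what $f \in H^2_0(D)$ guarantees, since $f$ is an $H^2$-limit of functions in $C_0^\infty(D)$, for which $\|\nabla f\|^2 = -(f,\Delta f)$ holds with no boundary terms; passing to the limit preserves the identity. I regard this density and approximation argument as the main (and essentially only) obstacle, the rest being routine.
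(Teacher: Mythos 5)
Your proof is correct and follows essentially the same route as the paper: Steklov's inequality in the bounded direction $x_2$ gives the first estimate, and the integration-by-parts identity $\|\nabla f\|^2=-(f,\Delta f)$ together with Cauchy--Schwarz yields the other two (you derive $a\|\nabla f\|^2\le\|\Delta f\|^2$ first and chain to get the middle inequality, whereas the paper derives $a^2\|f\|^2\le\|\Delta f\|^2$ first --- an immaterial reordering). Your explicit justification of the slice-wise Steklov argument and of the absence of boundary terms via density of $C_0^\infty(D)$ in $H^2_0(D)$ is a point the paper leaves implicit.
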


\begin{proof} By definition,
	$$\|\nabla f\|^2=\sum_{i=1}^3\|f_{x_i}\|^2.$$
	Making use of Steklov`s inequalities, we get
	$$\|\nabla f\|^2\geq \|f_{x_2}\|^2\geq\frac{\pi^2}{B^2}\|f\|^2=a\|f\|^2.$$
	On the other hand,
	$$a\|f\|^2\leq \|\nabla f\|^2 =-\int_{D_h}f\Delta fdx\leq \|\Delta f\|\|f\|.$$
	This implies
	$$a\|f\|\leq \|\Delta f\|\;\;\text{and} \;\;a^2\|f\|^2\leq \|\Delta f\|^2.$$
	Consequently, \;$a\|\nabla f\|^2\leq \|\Delta f\|^2.$ 
	Proof of Lemma 3.1 is complete.
\end{proof}
In $Q_{t}$ consider the following initial boundary value problem:

\begin{align}
	(u_j)_t+\Delta^2 u_j+\Delta u_j +\frac{1}{2}\sum _{i=1}^3(u^2_i)_{x_j}=0,&\\
	(u_i)_{x_j}=(u_j)_{x_i},\; j\ne i,\;\; i,j=1,...,3;&\\
	u_j|_{\partial D_j}=\frac{\partial}{\partial N} u_j|_{\partial D_z}=0,\; t>0,&\\
	u_j(x,0)=u_{j0}(x),\;j=1,...,3,\;\;x \in D.
\end{align}
where $\frac{\partial}{\partial N}$ is an exterior normal derivative on $\partial D$.

\begin{thm} Let 
	\begin{align} B<\pi,\; a=\frac{\pi^2}{B^2},\;\;\theta=1-\frac{1}{a}>0.
	\end{align}
	Given $u_{j0}\;\in H^4(D)\cap H^2_0(D),\;j=1,2,3$ such that
	\begin{align}
		\theta-\frac{48}{\theta a^{3/2}}\sum_{i=1}^3\|u_{i0}\|^2>0.
	\end{align}
	Then the problem (3.3)-(3.6) has a unique strong solution
	$$u_j\;\in L^{\infty}(\R^+;H^2_0(D)),\; \;\Delta^2 u_j\; \in L^{\infty}(\R^+;L^2(D));$$
	$$ u_{jt}\; \in  L^{\infty}(\R^+;L^2(D))\cap L^2(\R^+;H^2_0(D),\;j=1,2,3.$$
	Moreover, $u_j$ satisfy the following inequality
	\begin{align}
		\sum_{j=1}^3\|u_j\|^2(t)+ \frac{\theta}{2}\int_0^t\Big(\sum_{j=1}^3\|\Delta u_j\|^2(\tau)\Big)d\tau\leq \sum_{j=1}^3 \|u_{j0}\|^2,&\\
		\sum_{j=1}^3\|u_j\|^2(t)\leq \Big[\sum_{j=1}^3 \|u_{j0}\|^2\Big]\exp\{-\frac{a^2\theta t}{2}\}.
	\end{align}
	\begin{align}\sum_{j=1}^3\|u_{jt}\|^2(t)\leq
		\Big(\sum_{j=1}^3\|u_{jt}\|^2(0)\Big)\exp\{-\frac{a^2\theta t}{2}\},
	\end{align}
	\begin{align}\sum_{j=1}^3\|u_{jt}\|^2(t)+
		\frac{\theta}{2}\int_0^t\Big(\sum_{j=1}^3\|\Delta u_{j\tau}\|^2(\tau)\Big)d\tau&\notag\\\leq\sum_{j=1}^3\|u_{jt}\|^2(0),
	\end{align}
	where, $$ \|u_{jt}\|^2(0)\|\leq C(\|u_{0j}\|_W),\;j=1,2,3.$$
\end{thm}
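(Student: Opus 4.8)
The plan is to treat \eqref{} all four displayed inequalities as a priori estimates for smooth solutions, and then to realize them by the standard scheme: construct Galerkin approximations in a basis of $H^2_0(D)$ (on the unbounded groove either adapting the basis to the two bounded directions together with the half-line, or exhausting $D$ by bounded sub-grooves and passing to the limit), derive (3.9)--(3.12) uniformly in the approximation parameter, pass to the limit by weak-$*$ compactness and the Aubin--Lions lemma for the quadratic term, and finally prove uniqueness by a difference estimate. For the first estimate I would multiply (3.3) by $u_j$, integrate over $D$ and sum over $j$. The boundary conditions (3.5) let the biharmonic and Laplacian terms integrate by parts cleanly, yielding $\frac12\frac{d}{dt}\sum_j\|u_j\|^2+\sum_j\|\Delta u_j\|^2-\sum_j\|\nabla u_j\|^2+N=0$, where $N=\frac12\sum_{i,j}\int_D(u_i^2)_{x_j}u_j\,dx$. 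Using the compatibility relations (3.4) to replace $(u_i)_{x_j}$ by $(u_j)_{x_i}$ and integrating by parts, $N$ collapses to the divergence form $N=-\frac12\int_D(\nabla\cdot u)\,|u|^2\,dx$.

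By Lemma 3.1 one has $\sum_j\|\nabla u_j\|^2\le a^{-1}\sum_j\|\Delta u_j\|^2$, so the two middle terms combine into a dissipation $\theta\sum_j\|\Delta u_j\|^2$ with $\theta=1-1/a$, leaving $\frac12\frac{d}{dt}E+\theta L\le|N|$, where $E=\sum_j\|u_j\|^2$ and $L=\sum_j\|\Delta u_j\|^2$. The next step is to bound and absorb $N$. Estimating it by the Ladyzhenskaya-type inequality $\|v\|_{L^4}\le 2^{1/2}\|v\|^{1/4}\|\nabla v\|^{3/4}$ recalled above, the interpolation $\|\nabla u_j\|^2\le\|u_j\|\,\|\Delta u_j\|$, and Lemma 3.1, gives a bound of the form $|N|\le C\,a^{-3/4}E^{1/2}L$. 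The smallness hypothesis (3.8), which is equivalent to $E(0)<\theta^2a^{3/2}/48$, is exactly what forces $C\,a^{-3/4}E^{1/2}\le\theta/2$ as long as $E(t)\le E(0)$; a continuity (bootstrap) argument then shows that $E$ can never increase, so this holds for all $t$. Absorbing $|N|$ into half of the dissipation yields $\frac12\frac{d}{dt}E+\frac{\theta}{2}L\le 0$, which integrates to (3.9). Since $L\ge a^2E$ by Lemma 3.1, the same inequality reads $\dot E\le -a^2\theta E$ (up to the stated factor), and Gronwall's inequality delivers the exponential decay (3.10).

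For (3.11)--(3.12) I would differentiate (3.3) in $t$, multiply by $u_{jt}$ and sum over $j$, repeating the previous computation with $u_{jt}$ in place of $u_j$; the time-differentiated compatibility $(u_{it})_{x_j}=(u_{jt})_{x_i}$ again puts the new nonlinear term into a tractable form, now trilinear in $u$ and $u_t$. Bounding it with the $L^4$ inequality, Lemma 3.1, and the already-established smallness of $E$, one absorbs it into $\theta\sum_j\|\Delta u_{jt}\|^2$, producing (3.12) and, via $L\ge a^2E$ together with Gronwall's inequality, the decay (3.11). The initial value $\|u_{jt}\|^2(0)$ is read off from the equation at $t=0$, namely $u_{jt}(0)=-\Delta^2u_{j0}-\Delta u_{j0}-\frac12\sum_i(u_{i0}^2)_{x_j}$, which is controlled by $\|u_{j0}\|_{H^4}$ and the $L^4$ bound on the data, giving the last line of the statement.

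I expect the main obstacle to be twofold. First, the nonlinear term in the second ($u_t$) estimate mixes $u$ and $u_t$, so its absorption is not automatic: it requires using both the smallness of $E(t)$ and the control of $\int_0^t\sum_j\|\Delta u_j\|^2$ furnished by (3.9), while keeping the numerical constants compatible with (3.8). Second, because $D$ is unbounded one must justify that no boundary terms at infinity arise in the integrations by parts and recover enough compactness to pass to the limit in the quadratic nonlinearity; this is handled by the uniform estimates together with local Aubin--Lions compactness and the exponential decay, after which uniqueness follows from a standard difference estimate closed by the same smallness condition.
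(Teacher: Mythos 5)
Your proposal is correct and follows essentially the same route as the paper's own proof: Galerkin approximation, the two energy estimates (multiplier $u_j$, then differentiation in $t$ with multiplier $u_{jt}$), absorption of the nonlinearity via the $L^4$ Ladyzhenskaya-type inequality together with Lemma 3.1 and the smallness condition (3.8) sustained by a continuity argument, then $\|\Delta u_j\|^2\geq a^2\|u_j\|^2$ plus Gronwall for the exponential decay, and a difference estimate for uniqueness. The minor variations (rewriting the nonlinear term in divergence form before estimating, invoking Aubin--Lions explicitly) are implementation details, not a different method.
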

\begin{proof}
	
	Define the space $W=H^4(D)\cap H^2_0(D)$ and let $\{w_i(x), \; i\in \N\}$ be a countable dense set in $W$.
	We can construct approximate solutions to (3.3)-(3.6) in the form
	$$u^N_j(x,t)=\sum_{i=1}^N g_i^j(t)w_i(x);\;j=1,2,3.$$
	Unknown functions $g_i^j(t)$\;\;satisfy the following initial problems:
	\begin{align}\frac{d}{dt}(u^N_j,w_j)(t)+(\Delta u^N_j,\Delta w_j)(t)-(\nabla u^N_j, \nabla w_j)(t)&\notag\\
		-\frac{1}{2}\sum_{i=1}^3((u^N_i)^2,(w_j)_{x_j})(t)=0,&\\
		g_i^j(0)=g_{i0}^j, \;\j=1,2,3;\;\;i=1,2,... .
	\end{align}
By Caratheodory`s existence theorem, there exist solutions of (3.13)-(3.14) at least locally in $t$.
All the estimates we will prove will be done on smooth solutions of (3.3)-(3.6). Naturally, the same estimates are true also for approximate solutions $u^N_j.$\\ 
\noi {\bf Estimate I }Mltiply (3.3) by $2u_j$  to obtain 

\begin{align}\frac {d}{dt}\|u_j\|^2(t)+2\|\Delta u_j\|^2(t)-2\|\nabla u_j\|^2(t)&\notag\\
	-\sum_{i=1}^3(u_i^2,(u_j)_{x_j})(t)=0,\;j=1,2,3.
\end{align}

Since $a\|\nabla u_j\|^2(t)\leq \|\Delta u_j\|^2(t)$ , taking into account  (3.7), we get

\begin{align}\frac {d}{dt}\|u_j\|^2(t)+\theta\|\Delta u_j\|^2(t)+\theta\|\Delta u_j\|^2(t)&\notag\\
	-\sum_{i=1}^3(u_i^2,(u_j)_{x_j})(t)\leq 0.
\end{align}

Making use of Lemmas 2.3, 3.1, we estimate

$$I=\sum_{i=1}^3(u_i^2,(u_j)_{x_j})\leq \sum_{i=1}^3\|u_i\|\|u_i\|_{L^4(D)}\|(u_j)_{`x_j}\|_{L^4(D)}$$
$$\leq \sum_{i=1}^32\|u_i\|\|u_i\|^{1/4}\|\nabla u_i\|^{3/4}\|\nabla u_j\|^{1/4}\|\Delta u_j\|^{3/4}$$
$$\leq 2\sum_{i=1}^3\frac{1}{a^{1/4}}\|u_i\|\|\nabla u_i\|\|\Delta u_j\| \leq \epsilon \|\Delta u_j\|^2$$
$$+ \frac{1}{\epsilon}\Big(\sum_{i=1}^3\frac{1}{a^{1/4}}\|u_i\|\|\nabla u_i\|\Big)^2\leq\epsilon \|\Delta u_j\|^2$$
$$+ \frac{4}{a^{1/2}\epsilon}\sum_{i=1}^3\\|u_i\|^2\|\nabla u_i\|^2\leq \epsilon \|\Delta u_j\|^2+ \frac{4}{a^{3/2}\epsilon}\sum_{i=1}^3\\|u_i\|^2\|\Delta u_i\|^2,$$
where $\epsilon$ is an arbitarary positive number.
Taking $2\epsilon=\theta,$ substituting $I$ into (3.16) and summing up over $j=1,2,3$, we get

\begin{align}\frac {d}{dt}\sum_{j=1}^3\|u_j\|^2(t)+\frac{\theta}{2}\sum_{j=1}^3\|\Delta u_j\|^2(t)&\notag\\
	+\Big[\theta-\frac{24}{\theta a^{3/2}}\sum_{i=1}^3\|u_i\|^2(t)\Big]\sum_{j=1}^3\|\Delta u_j\|^2(t)	\leq 0.
\end{align}

Condition (3.8), positivity of the second term in (3.17) and standard arguments guarantee that
\begin{equation}\Big[\theta-\frac{24}{\theta a^{3/2}_z}\sum_{i=1}^3\|u_i\|^2(t)\Big]>0 , \;\;t>0.
\end{equation}
This transforms (3.17) into the following inequality:

\begin{align}\frac {d}{dt}\sum_{j=1}^3\|u_j\|^2(t)+\frac{\theta}{2}\sum_{j=1}^3\|\Delta u_j\|^2(t)	\leq 0.
\end{align}

Integrating (3.19), we obtain

\begin{align}
	\sum_{j=1}^3\|u_j\|^2(t+\int_0^t\frac{\theta}{2}\sum_{j=1}^3\|\Delta u_j\|^2(\tau)\,d\tau \leq \sum_{j=1}^3\|u_{j0}\|^2.
\end{align}
On the other hand, Lemma 3.1 allows us to rewrite (3.19) in the form
\begin{align}\frac {d}{dt}\sum_{j=1}^3\|u_j\|^2(t)+\frac{a^2\theta}{2}\sum_{j=1}^3\| u_j\|^2(t)	\leq 0.
\end{align}
From this, (3.10) follows.

\noi {\bf Estimate II }

Differentiate (3.3) with respect to $t$, then multiply the results respectively  by $2(u_j)_t$  to get
\begin{align}
	\frac{d}{dt}\|u_{jt}\|^2(t)+2\|\Delta u_{jt}\|^2(t)-2\|\nabla u_{jt}\|^2(t)&\notag\\
	=2\sum_{i=1}^3(u_iu_{it},(u_j)_{x_{jt}})(t),\;j=1,2,3.
\end{align}
Making use of Lemmas 2.4 and 3.1, we estimate
\begin{align*}I=2\sum_{i=1}^3(u_iu_{it},(u_j)_{x_{jt}})(t)
	\leq 2\sum_{i=1}^3\|u_i\|(t)\|u_{it}\|_{L^4(D)}(t)\|\nabla u_{jt}\|_{L^4(D)}(t)
\end{align*}

$$
\leq 4\sum_{i=1}^3\|u_i\|\|u_{it}\|^{1/4}\|\nabla u_{it}\|^{3/4}\|\nabla u_{it}\|^{1/4}\|\Delta u_{it}\|^{3/4}$$
$$\leq 4\sum_{i=1}^3\frac{1}{a^{1/4}}\|u_i\|\|\nabla u_{it}\|\|\Delta u_{it}\|\leq \epsilon\|\Delta u_{it}\|^2$$
$$+ \frac{4}{\epsilon}\Big(\sum_{i=1}^3\frac{1}{a^{1/4}}\|u_i\|\|\nabla u_{it}\|\Big)^2\leq\epsilon\|\Delta u_{it}\|^2$$$$+\frac{16}
{a^{1/2}\epsilon}\sum_{i=1}^3\|u_i\|^2\|\nabla u_{it}\|^2\leq\epsilon\|\Delta u_{it}\|^2+\frac{16}{a^{3/2}\epsilon} \sum_{i=1}^3\|u_i\|^2\|\Delta u_{it}\|^2.$$
Taking $2\epsilon=\theta,$ substituting $I$ into (3.22) and summing up over $j=1,2,3$, we get

\begin{align}\frac {d}{dt}\sum_{j=1}^3\|u_{jt}\|^2(t)+\frac{\theta}{2}\sum_{j=1}^3\|\Delta u_{jt}\|^2(t)&\notag\\
	+\Big[\theta-\frac{48}{\theta a^{3/2}}\sum_{i=1}^3\|u_i\|^2(t)\Big]\sum_{j=1}^3\|\Delta u_{jt}\|^2(t)\leq 0.
\end{align}

Taking into account (3.7), rewrite (3.23) in the form

\begin{align}\frac {d}{dt}\sum_{j=1}^3\|u_{jt}\|^2(t)+\frac{\theta}{2}\sum_{j=1}^3\|\Delta u_{jt}\|^2(t)\leq 0.
\end{align}

This implies

\begin{align}\sum_{j=1}^3\|u_{jt}\|^2(t)+\frac{\theta}{2}\int_0^t\Big(\sum_{j=1}^3\|\Delta u_{j\tau}\|^2(\tau)d\tau
	\leq \sum_{j=1}^3\|u_{jt}\|^2(0),
\end{align}
where $\|u_{jt}\|^2(0)\leq C(\|u_{0j}\|_W).$ Making use of Lemma 3.1, rewrite (3.24) as

\begin{align}\frac {d}{dt}\sum_{j=1}^3\|u_{jt}\|^2(t)+\frac{a^2\theta}{2}\sum_{j=1}^3\| u_{jt}\|^2(t)\leq 0.
\end{align}

Integrating this, we find

\begin{align} \sum_{j=1}^3\|u_{jt}\|^2(t)\leq \Big(\sum_{j=1}^3\|u_{jt}\|^2(0)\Big)\exp\{-\frac{a^2\theta}{2}t\}.
	\end{align}

Estimates (3.20) and (3.25) imply that $u_j \in L^{\infty}(\R^+;H^2_0)(D)),$$$u
_{jt}\in L^{\infty}(\R^+;L^2(D))\cap L^2(\R^+;H^2_0(D)),\;j=1,2,3.$$

These inequalities guarantee the existence of  strong soluitons to (3.3)-(3.6) $\{u_j(x,t)\}$ satisfying  the following integral identities:
\begin{align} ((u_j)_t,\phi)(t)+(\Delta u_j,\Delta \phi)(t)+(\Delta u_j,\phi)(t)&\notag\\
	-\frac{1}{2}\sum_{i=1}^3(u_i)^2,\phi_{x_j})(t)=0,\;t>0,
\end{align}
where $\phi(x,y)$ is an arbitrary function from $H^2_0(D).$\\
We can rewrite (3.28) in the form

\begin{align*}(\Delta u_j,\Delta \phi)(t)=-([(u_j)_t+\Delta u_j-\sum_{i=1}^3u_i(u_i)_{x_j}] ,\phi)(t).
\end{align*}

It follows from here and estimates above that $$\Delta^2 u_j \in L^{\infty}(\R^+;L^2(D)),\;j=1,2,3.$$

This proves the existence part of Theorem 3.1.
\begin{lemma} The strong solutions of (3.3)-(3.6) is unique.
	\end{lemma}
 \begin{proof} Let $u_j$ and $v_j, \;j=1,2,3,$ be two distinct solutions to (3.3)-(3.6). Denoting $w=u_j-v_j$, we come to the following system:
 	
 	\begin{align} \frac{d}{dt}\|w_j\|^2(t)+2\|\Delta w_j\|^2(t)-2\|\nabla w_j\|^2(t)&\notag\\
 		=\sum_{i=1}^4(\{u_i+v_i\}w_i,(w_j)_{x_j})(t),&\\
 		(w_i)_{x_j}=(w_i)_{x_j}, \;i\ne j,&\\
 		w_j|_{\partial D_1}=\frac{\partial}{\partial N} w_j|_{\partial D_1}=0,\; t>0,&\\
 		w_j(x,0)=0,\;\;j=1,2,3.	
 	\end{align}
 	
 	Making use of Lemmas 2.3, 3.1, we estimate
 	$$
 	I=(\{u_i+v_i\}w_i,(w_j)_{x_j}) \leq \|(w_j)_{x_j}\|_{L^4(D_z)}\|w_i\|\|u_i+v_i\|_{L^4(D_z)}$$
 $$\leq 2\|w_i\|\|(w_j)_{x_j}\|^{1/4}\|\nabla (w_j)_{x_j}\|^{3/4}\|u_i+v_i\|^{1/4}\|\nabla (u_i+v_i)\|^{3/4}$$
 $$\leq \frac{2}{a^{1/4}}\|w_i\|\|\Delta (w_j)\|\|\nabla (u_i+v_i)\|$$
 $$\leq \epsilon\|\Delta (w_j)\|^2+ \frac{1}{\epsilon a^{3/2}}\|w_i\|^2\|\Delta (u_i+v_i)\|^2.$$
 
Substituting $I$ into (3.29), taking \;$2\epsilon=\theta$ and summing over $j=1,2,3,$ we get
 
\begin{align*} 
 \frac{d}{dt}\sum_{j=1}^3\|w_j\|^2(t)+\theta\sum_{j=1}^3\|\Delta w_j\|^2(t)&\notag\\ 
  \leq C\Big(\sum_{i=1}^3\{\|\Delta u_i\|^2+\|\Delta v_i\|^2\}\sum_{j=1}^3\|w_j\|^2(t)\Big).
 \end{align*}
Due to (3.20), $\|\Delta u_i\|^2(t)+\|\Delta v_i\|^2(t)\in L^1(\R^+),\;i=1,2,3.$\\
Applying Lemma 2.2, we obtain that
$$ \|w_j\|(t)\equiv 0,\; \;t>0,\;\;j=1,2,3.$$
This proves Lemma 3.2 and consequently Theorem 3.1. 
\end{proof} 
\begin{rem} We can define other grooves:
$$ D_{x_3}=\{ x=(x_1,x_2,x_3);\; x_1\in \R^1,\;x_2 \in \R^+,\; x_3\in (0,L_3>0),\}$$	
$$ D_{x_1}=\{ x=(x_1,x_2,x_3);\; x_1\in(0,L_1>0),\; x_2\in \R^1,\;x_3\in \R^+\}$$
and  obtain results similar to ones of Theorem 3.1.
\end{rem}

\section{ Conclusions}

In this work,  we studied initial boundary value problems for the three dimensional Kuramoto-Sivashinsky system (1.1) posed on  unbounded grooves. We defined a set of admissible domains which eliminate  destabilizing effects of terms $\Delta u_j$ by dissipativity of $\Delta^2 u_j.$   Since these problems do not admit the first a priori estimate independent of $t$ and solutions, in order to prove the existence of global  solutions, we put conditions  connecting geometrical properties of domains with initial data. We proved  the existence and uniqueness of a strong solution as well as exponential decay of $L^2$-norms.

\end{proof}
\section*{Conflict of Interests}

The author declares that there is no conflict of interest regarding the publication of this paper.

\medskip

\bibliographystyle{torresmo}
\end{document}